%

\documentclass[12pt,reqno]{amsart}

\usepackage{amsthm,amsmath,amssymb}
\usepackage{graphicx,color}
\usepackage[colorlinks=true]{hyperref}
\hypersetup{urlcolor=blue, citecolor=red}

\usepackage{enumitem}


\voffset=-1.5cm \textheight=23cm 
\hoffset=-.5cm \textwidth=16cm
\oddsidemargin=1cm \evensidemargin=-.1cm
\footskip=35pt \linespread{1.15}
\parindent=20pt
\sloppy

\newtheorem{theorem}{Theorem}[section]

\newtheorem{corollary}[theorem]{Corollary}

\newtheorem{lemma}[theorem]{Lemma}

\newtheorem{remark}[theorem]{Remark}

\def\R{\mathbb{R}}
\def\eps{\varepsilon}

\topmargin 0cm \oddsidemargin 0cm \evensidemargin 0cm \leftmargin
-1.1cm \textheight 21.1cm \textwidth 16.2cm

\title{Generalized elastica problems under area constraint}
\author[V. Ferone, B. Kawohl, C. Nitsch]
       {Vincenzo Ferone, 
       Bernd Kawohl, 
       Carlo Nitsch
       }
\address[V.~Ferone]{Universit\`a degli Studi di Napoli Federico II (Italy).}
\email{ferone@unina.it}
\address[B.~Kawohl]{Universit\"at zu K\"oln (Germany).}
\email{kawohl@math.uni-koeln.de}
\address[C.~Nitsch]{Universit\`a degli Studi di Napoli Federico II (Italy).}
\email{c.nitsch@unina.it}

\date{\today}
\keywords{Euler elastica, isoperimetric inequality.} 
\subjclass[2010]{49Q19, 51M16, 53C44.}

\begin{document}


\begin{abstract}
It was recently proved in \cite{BH,FKN} that the elastic energy $E(\gamma)=\tfrac{1}{2}\int_\gamma\kappa^2 ds$ of a closed curve $\gamma$ with curvature $\kappa$ has a minimizer among all plane, simple, regular and closed curves of given enclosed area $A(\gamma)$, and that the minimum is attained only for circles. In particular, the proof used in \cite{FKN} is of a geometric nature, and here we show under which hypothesis it can be extended to other functionals involving the curvature. As an example we show that the optimal shape remains a circle for the $p$-elastic energy $\int_\gamma|\kappa|^p ds$, whenever $p>1$. 
\end{abstract}

\maketitle

\section{introduction}
It was proved in \cite{BH,FKN} that the elastic energy 
$$E(\gamma):=\frac{1}{2}\int_\gamma \kappa^2$$
of a simple closed curve $\gamma$ with curvature $\kappa$, enclosing a given amount of area, is minimal when the curve is a circle. An equivalent way to formulate the result is to say that for all $\gamma$ smooth, simple and closed
\begin{equation}\label{linear}
E^2(\gamma){A}(\gamma)\ge \pi^3,
\end{equation}
with equality only if $\gamma$ is a circle. Here ${A(\gamma)}$ denotes the area enclosed by $\gamma$.
In this paper we study the non linear case with elastic energy of the form
\[
E_f(\gamma):=\int_\gamma f(\kappa(s))\, ds,
\]
and we look for conditions on $f$ under which the circle still minimizes $E_f(\gamma)$ among simply connected sets of given area.\\
Even though the reduction to the convex case is in general not straightforward, the study of convex curves, i.e. curves bounding convex domains, is a fundamental building block in the proofs of \eqref{linear}, see \cite{BH,FKN}. For convex curves \eqref{linear} has been known since a work of Gage \cite{Gage1983}. Our first result is indeed 
\begin{theorem}\label{prop}
If $p>1$ and $f(t)=t^p$ for $t\ge0$
then the disc, and only the disc, minimizes $E_f(\partial \Omega)$ among convex sets $\Omega$ of given area and boundary of class $W^{2,p}$.
\end{theorem}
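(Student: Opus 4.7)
My plan is to establish the sharp inequality
\[
\int_{\partial\Omega}\kappa^p\,ds \ \ge\ \frac{2\pi^{(p+1)/2}}{A^{(p-1)/2}},
\]
which is exactly the value taken when $\Omega$ is a disc of area $A=A(\Omega)$. Since $\partial\Omega$ is convex, $\kappa\ge 0$ almost everywhere, so $|\kappa|^p=\kappa^p$, and the Gauss--Bonnet theorem gives $\int_{\partial\Omega}\kappa\,ds=2\pi$. The two additional classical inputs I will use are Gage's inequality $\int\kappa^2\,ds\ge \pi L/A$ for convex curves, where $L$ denotes the perimeter, and the isoperimetric inequality $L\ge 2\sqrt{\pi A}$.

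For $p\ge 2$ the idea is to chain three inequalities. Hölder yields
\[
\int\kappa^2\,ds\ \le\ \Bigl(\int\kappa^p\,ds\Bigr)^{\!2/p}\, L^{(p-2)/p};
\]
combined with Gage this gives $\int\kappa^p\,ds\ge \pi^{p/2} L/A^{p/2}$, after which the isoperimetric inequality replaces $L$ by $2\sqrt{\pi A}$ to produce the target bound. Equality in Gage, in Hölder and in the isoperimetric inequality each independently forces $\partial\Omega$ to be a circle, so uniqueness of the minimizer is automatic in this range.

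The hard part will be the range $1<p<2$, where the Hölder step above reverses direction. The only immediate lower bound is the Jensen-type estimate $\int\kappa^p\,ds\ge (2\pi)^p/L^{p-1}$ (again from $\int\kappa\,ds=2\pi$), but combining it with the isoperimetric inequality is useless because $L$ can be arbitrarily large for thin convex bodies, and the Jensen bound then collapses. To handle this regime I would parametrize $\partial\Omega$ by its tangent angle $\theta\in[0,2\pi]$ and write
\[
\int\kappa^p\,ds\ =\ \int_0^{2\pi} R(\theta)^{1-p}\,d\theta,\qquad R=1/\kappa,
\]
then exploit the Fourier representation of the support function $h$ of $\Omega$: one has $L=2\pi a_0$ with $a_0$ the mean of $R$, while the isoperimetric deficit is a weighted $\ell^2$-sum of the higher Fourier modes of $R$,
\[
\frac{L^2}{4\pi}-A\ =\ \frac{\pi}{2}\sum_{n\ge 2}\frac{|\widehat R(n)|^2}{n^2-1}.
\]
Since $R\mapsto R^{1-p}$ is strictly convex for $p>1$, this Fourier encoding of the area constraint should yield a quantitative refinement of Jensen that closes the gap. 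Equality in the refined estimate again forces $R$ constant and hence $\Omega$ a disc, which completes the uniqueness analysis in all cases.
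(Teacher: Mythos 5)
Your chain for $p\ge2$ is correct and complete: H\"older gives $\int\kappa^2\,ds\le\bigl(\int\kappa^p\,ds\bigr)^{2/p}L^{(p-2)/p}$, which combined with Gage's inequality $\int\kappa^2\,ds\ge\pi L/A$ and then $L\ge2\sqrt{\pi A}$ yields $\int\kappa^p\,ds\ge 2\pi^{(p+1)/2}A^{(1-p)/2}$, and equality forces equality in the isoperimetric step, hence a disc. That is a genuinely more elementary route than the paper's for this range: the paper never chains inequalities, but instead proves existence of a minimizer (diameter bound plus Blaschke selection), passes to a centrosymmetric minimizer, shows its curvature is bounded away from zero, derives the Euler--Lagrange equation $\kappa^p=\alpha(\gamma(s)-\bar\gamma)\cdot{\bf n}$, and concludes via Andrews' classification theorem --- an argument that works uniformly for all $p>1$.

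For $1<p<2$, however, your proposal is only a plan, and this is precisely the range where the theorem is not a corollary of classical inequalities. The decisive step --- a ``quantitative refinement of Jensen'' extracted from the Fourier expression of the isoperimetric deficit --- is neither formulated nor proved, and there are concrete reasons to doubt it closes the gap in the form you suggest: the deficit controls the Fourier modes of $R$ only with the weak weights $1/(n^2-1)$, while the convexity gain of $t\mapsto t^{1-p}$ is governed by $p(p-1)t^{-1-p}$, which degenerates exactly where $R$ is large, i.e.\ for the long thin convex bodies for which the crude bound $(2\pi)^p/L^{p-1}$ collapses; in addition, the area is quadratic in the support function, $2A=\int_0^{2\pi}(h^2-h'^2)\,d\theta$, not a linear functional of $R$, so the constraint does not enter as a simple moment condition on $R$ (and $R$ need not even be a well-defined integrable function of $\theta$ when $\kappa$ vanishes on flat arcs, which $W^{2,p}$ convexity allows). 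As written, the case $1<p<2$ is a genuine gap: you must either carry out the refined Jensen estimate in detail, confronting the degeneracy just described, or fall back on the paper's variational strategy (existence, symmetrization, curvature bound away from zero, Euler--Lagrange equation, Andrews' theorem), which is what actually covers all $p>1$.
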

Then we prove sufficient conditions which allow us to extend the minimality of the circle from the class of convex curves (or domains) to nonconvex ones.
\begin{theorem}\label{main}
If a measurable function $f:\R\rightarrow[0,+\infty]$ satisfies
\begin{enumerate}
\item $f(0)=0$ and $f(s)>0$ when $s>0$,
\item $g(t)=f(\tfrac1{\sqrt t})\sqrt t$ is convex for $t>0$,
\end{enumerate}
and if the disc minimizes $E_f(\partial \Omega)$ among smooth convex sets $\Omega$ of given area, then it also minimizes $E_f(\partial \Omega)$ in the larger class of smooth simply connected sets $\Omega$ of given area.  Moreover, if the disc is the unique minimizer among convex sets, the same is true also among simply connected sets.
\end{theorem}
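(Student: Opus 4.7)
The plan is to reduce the simply-connected case to the convex case assumed in the theorem, using the convex hull $K:=\overline{\mathrm{conv}}\,\Omega$ as the bridging object. Hypothesis~(1) will kill the contribution of the straight segments appearing in $\partial K$, while hypothesis~(2), which is equivalent to the convexity of the function $h(a):=2\pi g(a/\pi)$ (the $E_f$-energy of the disc of area $a$), will let one transfer the disc bound from the possibly larger area $A(K)$ back to $A(\Omega)$.

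First I would write $\partial K=(\partial K\cap\partial\Omega)\cup\bigsqcup_{i}T_i$, with the $T_i$ the maximal open straight segments of $\partial K$. Since $\partial\Omega$ is smooth and $\partial K$ is $C^{1,1}$, each endpoint of a $T_i$ lies on $\partial\Omega$ at a point where the tangent of $\partial\Omega$ already coincides with $T_i$, so that after a routine mollification the convex hypothesis can be applied to a smooth convex set approximating $K$. Using $f(0)=0$ on the $T_i$ and $f\ge 0$ on the arcs $C_i\subset\partial\Omega$ bridged by the $T_i$, one obtains
\[
E_f(\partial\Omega)\;=\;E_f(\partial K)\;+\;\sum_{i}\int_{C_i}f(|\kappa|)\,ds\;\ge\;E_f(\partial K)\;\ge\;h(A(K)).
\]

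Next, to upgrade $h(A(K))$ to $h(A(\Omega))$ despite the absence of a useful monotonicity of $h$, I would analyse the bays $B_i$ (the connected components of $K\setminus\Omega$). In the cleanest situation each $B_i$ is itself convex, its boundary being $T_i$ together with a single concave arc of $\partial\Omega$ that becomes convex when viewed from inside $B_i$; the convex hypothesis applied to $B_i$ then gives $\int_{C_i}f(|\kappa|)\,ds\ge h(A(B_i))$. Combining these inequalities with the one for $K$ and using the additive identity $A(K)=A(\Omega)+\sum_{i}A(B_i)$ together with the convexity of $h$ (for instance through its supporting-line inequality at $a_0=A(\Omega)$), one should collapse the many terms into the target bound $E_f(\partial\Omega)\ge h(A(\Omega))$. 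For uniqueness, any nondegenerate $T_i$ makes the first inequality above strict, so equality in the end forces $\Omega=K$; the uniqueness among convex sets, assumed in the theorem, then identifies $\Omega$ as a disc.

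The step I expect to be most delicate is the final combination of the bay estimates. For the motivating example $f(t)=t^p$ with $p>1$ the function $g(t)=t^{(1-p)/2}$ is strictly decreasing, so monotonicity is unavailable in the right direction and the convexity of $g$ must really be used. A secondary subtlety is that a bay $B_i$ may fail to be convex when $\Omega$ has several nearby concavities (for example in a C-shaped domain where the single bay consists of a disc-like region attached through a narrow channel), in which case the convex hypothesis cannot be applied directly to $B_i$; the plan is then to iterate the convex-hull reduction inside each non-convex bay and telescope the resulting estimates, at each stage balancing the area loss against the energy gained through the convexity of $h$.
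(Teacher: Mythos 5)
Your route through the convex hull breaks down at exactly the step you flag as ``delicate'': the recombination inequality you would need, namely $h(A(K))+\sum_i h(A(B_i))\ge h(A(\Omega))$ with $A(K)=A(\Omega)+\sum_i A(B_i)$, is simply false for the energies at stake. For $f(t)=t^2$ one has $h(a)=2\pi^{3/2}a^{-1/2}$; a thin smooth crescent with $A(\Omega)=1$, whose single bay is essentially a half-disc of area $100$ (hence convex, so your bay estimate applies), gives $h(101)+h(100)\approx 0.2\,h(1)<h(1)$, and no convexity or supporting-line argument for $h$ can reverse this. The geometric reason is that both of your lower bounds throw away precisely the energy that makes the theorem true for such domains: the curvature concentrated near the tips, where $\partial\Omega$ leaves the hull; once $E_f(\partial K)$ and the bay energies are replaced by disc values at the \emph{large} areas $A(K)$ and $A(B_i)$, the (large) target $h(A(\Omega))$ is out of reach. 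There is a second, independent problem: the hypotheses admit non-even $f$, e.g.\ $f(t)=t_+^p$ (explicitly listed in the paper's Remark), for which your identity $E_f(\partial\Omega)=E_f(\partial K)+\sum_i\int_{C_i}f(|\kappa|)\,ds$ is wrong — the bay arcs enter $E_f(\partial\Omega)$ through $f(\kappa)$ with $\kappa\le 0$ there, which may vanish identically — so $\int_{C_i}f(|\kappa|)\,ds\ge h(A(B_i))$ does not bound any part of $E_f(\partial\Omega)$. Also, your premise that monotonicity is unavailable is not quite right: assumptions (1)--(2) do force the disc energy to be decreasing in the radius; it is only your hull construction that places this monotonicity on the wrong side, because the hull has \emph{larger} area than $\Omega$.

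The paper's proof goes in the opposite direction, producing convex sets \emph{inside} $\Omega$ rather than around it: one finds chords with parallel tangents at their endpoints cutting off two disjoint convex caps contained in $\Omega$, rotates each cap by $\pi$ about the midpoint of its chord to obtain convex $C^1$, piecewise $C^2$ sets $\Omega_1,\Omega_2$, and gets $E_f(\partial\Omega)\ge\tfrac12\bigl(E_f(\partial\Omega_1)+E_f(\partial\Omega_2)\bigr)\ge\tfrac12\bigl(E_f(\partial D_1)+E_f(\partial D_2)\bigr)\ge E_f(\partial D)$, where $D$ has area $\tfrac12\bigl(\mathrm{area}(\Omega_1)+\mathrm{area}(\Omega_2)\bigr)\le \mathrm{area}(\Omega)$; the last inequality is the convexity of $g$ (your convexity of $h$, used correctly as a two-point average), and the conclusion follows because the disc energy decreases with the radius, which is now the \emph{right} direction since the comparison disc is smaller. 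The general nonconvex case is reduced to this configuration by the cut-and-paste construction of \cite{FKN}, which glues in finitely many straight segments (harmless by $f(0)=0$) and only lowers area and energy; the strict area drop plus monotonicity also settles the equality case. So to repair your argument you should abandon the hull/bay decomposition and work with inner convex caps, where your correct observation that hypothesis (2) is convexity of the disc energy as a function of the area can be used as the paper uses it.
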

Here by smooth curve we mean $C^1$, piecewise $C^2$ and with curvature which changes sign a finite number of times. Analytic curves serve as an example. 
\begin{remark} The assumptions (1) and (2) on $f$ are, in particular, satisfied by all nonnegative functions, positive in $(0,\infty]$, convex in $[0,\infty]$, which vanish in zero. Two remarkable examples are given by $f(t)=|t|^p$ and $f(t)=t_+^p:=\left(\max\{t,0\}\right)^p$. Also notice that (1) and (2) do not require the convexity of $f$. 
\end{remark}
Hence from Theorem \ref{main} and Theorem \ref{prop} we deduce the following nonlinear generalizations of \eqref{linear}.
\begin{corollary}\label{corollary}
For any $p\in(1,\infty]$ and for the energies
\[F_p(\gamma):=\left(\frac{1}{2}\right)^{\frac2p}\|\kappa\|_p^2\qquad\hbox{or}\qquad
\widetilde F_p(\gamma):=\left(\frac{1}{2}\right)^{\frac2p}\|\kappa_+\|_p^2
\]
the following inequalities hold true
\begin{equation}\label{isop}
F_p(\gamma)^{\frac p{p-1}}\,A(\gamma)\ge \pi^{\frac{p+1}{p-1}}\qquad\hbox{or}\qquad
\widetilde F_p(\gamma)^{\frac p{p-1}}\,A(\gamma)\ge \pi^{\frac{p+1}{p-1}},
\end{equation}
for all curves in $W^{2,p}$, with equality only for the disc.
\end{corollary}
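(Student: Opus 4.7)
The plan is to apply Theorem \ref{main} with $f(t)=|t|^p$ (for $F_p$) and $f(t)=t_+^p$ (for $\widetilde F_p$), then read off the sharp constant by direct evaluation on a disc, and finally treat $p=\infty$ by passing to the limit. Verification of the hypotheses of Theorem \ref{main} is short: (1) is immediate for both choices of $f$, while for (2) one computes in both cases
$$g(t)=f\bigl(t^{-1/2}\bigr)\sqrt{t}=t^{(1-p)/2},\qquad t>0,$$
which is convex since $(1-p)/2<0$ when $p>1$.

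On a convex curve $\kappa\ge 0$, so that $|\kappa|^p=\kappa_+^p=\kappa^p$ and both energies reduce to the one considered in Theorem \ref{prop}; that theorem supplies the convex-case minimality (with uniqueness) needed to invoke Theorem \ref{main}, which then extends the conclusion to smooth simply connected sets. Upgrading from smooth curves (in the sense of Theorem \ref{main}) to the full class $W^{2,p}$ is a routine density argument combined with the lower semicontinuity of the energies with respect to $W^{2,p}$-convergence.

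To identify the constant I would evaluate on a disc $B_R$: constant curvature $1/R$, length $2\pi R$, and area $\pi R^2$ give $\|\kappa\|_p^p=2\pi R^{1-p}$ and hence
$$F_p(\partial B_R)=2^{-2/p}(2\pi)^{2/p}R^{2(1-p)/p}=\pi^{2/p}R^{2(1-p)/p};$$
a short algebraic check yields $F_p^{p/(p-1)}A=\pi^{(p+1)/(p-1)}$, and the same value for $\widetilde F_p$ since $\kappa=\kappa_+$ on a disc. This establishes \eqref{isop} for every finite $p>1$, with the disc as the only equality case.

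The endpoint $p=\infty$ is obtained by letting $p\to\infty$ in the finite-$p$ inequalities, using $\|\kappa\|_p\to\|\kappa\|_\infty$ for $\kappa\in L^\infty$; this yields $\|\kappa\|_\infty^2\,A(\gamma)\ge\pi$. The step I expect to require most care is the uniqueness of the disc at $p=\infty$, since strict inequality for each finite $p$ need not survive the passage to the limit; this would need a separate argument, for instance an elementary geometric estimate bounding the area of a simply connected region below in terms of $1/(\max|\kappa|)^2$, with equality only for the disc. Aside from this endpoint subtlety, the corollary really is a corollary, the substantive work being already done in Theorems \ref{prop} and \ref{main}.
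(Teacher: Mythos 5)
Your proposal follows essentially the same route as the paper: verify hypotheses (1)--(2) of Theorem \ref{main} for $f(t)=|t|^p$ and $f(t)=t_+^p$, use Theorem \ref{prop} for the convex case, evaluate the constant on the disc, pass to $W^{2,p}$ by density, and handle $p=\infty$ by letting $p\to\infty$. Your flagged endpoint concern is exactly the point the paper settles by citing known results (\cite{PI}, \cite{HT}, \cite{Pa}, etc.), and the area bound you sketch in terms of $1/(\max|\kappa|)^2$, with equality only for the disc, is precisely the Pestov--Ionin-type statement invoked there, so that part is fine as a citation rather than a gap.

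The one claim you assert without argument is uniqueness of the equality case at finite $p$ within the full class $W^{2,p}$. The density step (note that under strong $W^{2,p}$ convergence the energy is in fact continuous, which is what the argument needs; lower semicontinuity alone points in the wrong direction for transferring the lower bound) yields only the inequality \eqref{isop} for general $W^{2,p}$ curves; a priori a non-smooth $W^{2,p}$ curve, e.g.\ one whose curvature changes sign infinitely often, could achieve equality without being covered by Theorem \ref{main}. The paper closes this by a one-line regularity remark: a $W^{2,p}$ curve attaining equality is an extremal of the constrained problem, hence analytic by standard arguments in the calculus of variations, which places it back in the smooth class where Theorem \ref{main} gives uniqueness. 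Adding that remark would complete your argument.
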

The structure of the paper is as follows. In Section \ref{sec_prop} we prove Theorem \ref{prop}. We first show that a sequence of convex sets with area and energy both equibounded, has also equibounded diameter. Hence we prove compactness of a minimizing sequence of domains. Then we show that a minimum is necessarily uniformly convex in the sense that the curvature, as an $L^p$ function, is essentially bounded away from zero. In view of the uniform convexity one can then perform a domain variation and work out the Euler Lagrange equation which the boundary of an energy minimizer has to satisfy. It turns out, using a result by Ben Andrews \cite{A}, that the disc is the unique stationary point and therefore also the unique energy minimizer. \\
In Section \ref{sec_theorem} we prove Theorem \ref{main} and Corollary \ref{corollary}. The proof of Theorem \ref{main} is mainly contained in \cite{FKN}, where the case $p=2$ is studied in details. Here we only need to understand the proof in a more general framework.\\
Throughout the paper $(\bf{e_1},{\bf e_2})$ will be the canonical orthonormal basis of $\mathbb{R}^2$. For a smooth regular planar curve $\gamma:[0,L]\to \mathbb{R}^2$ parametrized by arc length $s$, by convention we define the normal vector ${\bf n}(s)$ so that ${\bf n}(s)$ and  the unit tangent vector ${\bf t}(s)=\gamma'(s)$ form for all $s\in [0,L]$ a basis $({\bf t},{\bf n})$ that has the same orientation as the basis $(\bf{e_1},{\bf e_2})$. The scalar signed curvature $\kappa(s)$ is defined by
$$\frac{d}{ds}{\bf t}(s)=:\kappa(s)\,{\bf n}(s).$$
In particular for $\gamma\in W^{2,p}([0,L]; \mathbb{R}^2)$, 
 the norm $\|\kappa\|_p:=\left(\int_0^L\kappa(s)ds\right)^{1/p}$ is well defined.
\section{Proof of Theorem \ref{prop}}\label{sec_prop}
To prove Theorem \ref{prop} we need two preliminary Lemmata.
We parametrize a simple closed curve $\gamma$ by arc length $s$, with $(x(s),y(s)):[0,L]\rightarrow\R^2$ running counterclockwise, and we denote by $\theta(s):[0,L]\rightarrow[0,2\pi]$ the angle between the tangent ${\bf t}(s)$ and $\bf e_1$. Then the parametrization is
\[
x(s)=x(0)+\int_0^s\cos\theta(t) dt, \qquad y(s)=y(0)+\int_0^s\sin\theta(t) dt.
\]
This choice implies that $\theta$ is monotone nondecreasing if $\gamma$ is the boundary of a convex set, and that $\theta$ is a.e. differentiable with $\theta'=\kappa$. Within this section we are considering the case where $\kappa$ is nonnegative and $f(\kappa)=\kappa^p$. Minimization of $E_f$ is analogous to minimization of \[F_p(\gamma):=\left(\frac{1}{2}\right)^{\frac2p}\|\kappa\|_p^2.\] 
 If not otherwise specified we always assume that the set $\Omega$ is oriented so that $$\theta(0)=0.$$ 
The first result is

\begin{lemma}\label{existence}
For any given $a>0$ there exists a minimizer of $F_p(\partial \Omega)$ among convex sets $\Omega$ of area $a$.
\end{lemma}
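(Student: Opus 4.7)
The plan is a direct-method compactness argument. Fix $a>0$ and let $(\Omega_n)$ be a minimizing sequence of convex sets of area $a$, with $F_p(\partial\Omega_n)\to I:=\inf F_p$ and $\|\kappa_n\|_p^p\le M$ uniformly. It is convenient to use the tangent-angle parametrization, writing $\rho_n(\theta)=1/\kappa_n(\theta)$ for the radius of curvature of $\partial\Omega_n$ viewed as a function of the outward normal angle $\theta\in[0,2\pi]$; in this parametrization
$$L_n=\int_0^{2\pi}\!\rho_n\,d\theta,\qquad \|\kappa_n\|_p^p=\int_0^{2\pi}\!\rho_n^{1-p}\,d\theta.$$
Three steps are needed: a uniform diameter bound, Hausdorff compactness, and lower semicontinuity of $F_p$.

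The crucial step is the diameter bound. Let $R_1,R_2\in\partial\Omega_n$ realize the diameter $d_n$, placed at $(0,0)$ and $(d_n,0)$, and let $T_n^\pm$ denote the maximum of $\pm y$ on $\Omega_n$. Inscribing in $\Omega_n$ the quadrilateral formed by $R_1$, $R_2$ and the two extremal points above and below the $x$-axis gives $|\Omega_n|\ge\tfrac12 d_n(T_n^++T_n^-)$, hence $T_n^++T_n^-\le 2a/d_n$. On the other hand $T_n^+=\int_{\pi/2}^{\pi}\sin\theta\,\rho_n\,d\theta$, and the reverse H\"older inequality with exponents $(p-1)/p$ and $1-p$, valid precisely because $1-p<0$ when $p>1$, gives
$$T_n^+\ge C_p\Bigl(\int_{\pi/2}^{\pi}\rho_n^{1-p}\,d\theta\Bigr)^{1/(1-p)}\ge C_p\,M^{1/(1-p)},$$
with $C_p:=\bigl(\int_{\pi/2}^{\pi}(\sin\theta)^{(p-1)/p}\,d\theta\bigr)^{p/(p-1)}>0$, and analogously for $T_n^-$. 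Combining yields $d_n\le a\,M^{1/(p-1)}/C_p$.

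With the uniform diameter bound in hand, after translation all $\Omega_n$ lie in a common compact subset of $\R^2$, and Blaschke's selection theorem provides a subsequence converging in Hausdorff distance to a compact convex body $\Omega_\infty$. Continuity of Lebesgue measure under Hausdorff convergence of convex bodies gives $|\Omega_\infty|=a$. Hausdorff convergence is equivalent to uniform convergence of support functions $h_n\to h$, so the nonnegative curvature measures $\rho_n\,d\theta=(h_n+h_n'')\,d\theta$ converge weakly-$*$ on $\mathbb{S}^1$ to $\rho\,d\theta=(h+h'')\,d\theta$. Since $\Phi(t)=t^{1-p}$ is convex on $(0,\infty)$ with vanishing recession at infinity, Reshetnyak-type lower semicontinuity of convex integrands against weakly converging Radon measures gives $\int\rho^{1-p}d\theta\le\liminf_n\int\rho_n^{1-p}d\theta$, whence $F_p(\partial\Omega_\infty)\le I$ and $\Omega_\infty$ is a minimizer.

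The step I expect to be the main obstacle is the diameter bound. It is there that $p>1$ enters essentially: the sign $1-p<0$ is what enables the reverse-H\"older estimate, converting an upper bound on $\int\rho_n^{1-p}d\theta$ into a lower bound on the minor extent of $\Omega_n$. For $p=1$ the argument necessarily fails, as long thin degenerating ellipses then form a sequence with bounded area and bounded energy; so any working proof must exploit the strict convexity of $t\mapsto t^{1-p}$ in a quantitative way.
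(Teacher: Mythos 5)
Your argument is correct and follows essentially the same route as the paper: a uniform diameter bound obtained by exploiting $p>1$ through a H\"older-type estimate together with the inequality $a\ge\tfrac12\,d\,w$ (you re-derive Kubota's bound via an inscribed quadrilateral and work in the angle variable rather than arc length), followed by Blaschke selection and lower semicontinuity of the energy. The remaining differences are cosmetic: the paper asserts lower semicontinuity directly while you spell it out via weak-$*$ convergence of the surface-area measures and Reshetnyak-type semicontinuity, and your formula for $T_n^+$ is written in the tangent-angle rather than the stated normal-angle convention, which does not affect the estimate.
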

\begin{proof}
The proof follows from the Blaschke selection theorem and the lower semicontinuity of energy once we establish the boundedness of any minimizing sequence. 

We parametrize the boundary of a convex set $\Omega$ of area $a$ as above. Therefore $\theta$ is a nondecreasing function and we have:
\begin{equation}\label{ineqtheta}
|\theta(s)|\le \int_0^s|\theta'(s)|\,ds\le 2^{\frac1p}s^{\frac{p-1}{p}}F_p(\gamma)^{\frac12}.
\end{equation}
For $s=L$ this turns into
\begin{equation}\label{ineqel}
L\ge2 \left(\frac{\pi}{F_p(\gamma)^{\frac12}}\right)^{\frac{p}{p-1}}.
\end{equation}
Choose $s_0=\left(\frac{\pi}{2^{\frac{2p+1}{p}}F_p(\gamma)^{\frac12}}\right)^{\frac{p}{p-1}}$ and notice that by (\ref{ineqel}) we have $s_0<L$, while by (\ref{ineqtheta}) we have $\theta(s)\le \frac\pi4$ for all $0\le s\le s_0$.
Consequently
\begin{equation*}
x(s_0)-x(0)=\int_0^{s_0}\cos\theta(s)\,ds\ge s_0 \cos\theta(s_0)\ge \frac{s_0}{\sqrt2}
\end{equation*}
and analogously
\begin{equation*}
x(L-s_0)-x(0)\le -\frac{s_0}{\sqrt2}.
\end{equation*}
Thus, the width $w$ of $\Omega$ satisfies $w(\Omega)\ge \sqrt2 s_0$.
Since by an old result of Kubota, see \cite{Ku} or \cite{SA}, the diameter $d$ of a convex planar set $\Omega$ is bounded from above in terms of area and width by  $d\le \frac{2a}{w(\Omega)}$, we may conclude that 
\begin{equation}\label{diam}
d\le 2^{\frac{5p+1}{2(p-1)}}a\left(\frac{F_p(\gamma)^{\frac12}}\pi\right)^{\frac{p}{p-1}}.
\end{equation}
Therefore $d$ is bounded in terms of the energy and area alone.
\end{proof}

\begin{lemma}\label{centrosymm}
For any given $a>0$ there exists a centrosymmetric minimizer of $F_p(\partial \Omega)$ among convex sets $\Omega$ of area $a$. Moreover, the curvature of the boundary of such a minimizer is bounded away from 0. 
\end{lemma}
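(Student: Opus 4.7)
The lemma consists of two assertions: existence of a centrosymmetric minimizer, and a positive lower bound on its curvature.

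For the first assertion my plan is Minkowski symmetrization. Let $\Omega_0$ be a minimizer provided by Lemma \ref{existence}, parametrized by the tangent angle $\theta$, with support function $h$ and radius of curvature $\rho=h+h''$. Define
\[
\Omega^{*}=\tfrac{1}{2}\bigl(\Omega_0+(-\Omega_0)\bigr),
\]
which is centrally symmetric with $h^{*}(\theta)=\tfrac{1}{2}(h(\theta)+h(\theta+\pi))$ and $\rho^{*}(\theta)=\tfrac{1}{2}(\rho(\theta)+\rho(\theta+\pi))$. Since $F_p(\partial\Omega)^{p/2}$ is a constant multiple of $\int_0^{2\pi}\rho^{1-p}\,d\theta$ and $t\mapsto t^{1-p}$ is convex on $(0,\infty)$ when $p>1$, pointwise Jensen gives $F_p(\partial\Omega^{*})\le F_p(\partial\Omega_0)$, while Brunn--Minkowski gives $A(\Omega^{*})\ge A(\Omega_0)=a$. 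Rescaling $\Omega^{*}$ to area $a$ and verifying that it is still a minimizer reduces to the scale-invariant estimate
\[
F_p(\partial\Omega^{*})^{p/(p-1)}\,A(\Omega^{*})\le F_p(\partial\Omega_0)^{p/(p-1)}\,A(\Omega_0),
\]
which I would prove by examining the Minkowski interpolation $\Omega_t=(1-t)\Omega_0+t(-\Omega_0)$ and exploiting that $t\mapsto\int\rho_t^{1-p}\,d\theta$ is convex while $t\mapsto A(\Omega_t)$ is concave, both extremized at $t=\tfrac{1}{2}$. An equivalent, cleaner route is to rerun the direct method of Lemma \ref{existence} within the subclass of centrosymmetric convex bodies of area $a$: the diameter estimate \eqref{diam} is preserved, Blaschke compactness and lower semicontinuity of $F_p$ deliver a centrosymmetric minimizer $\Omega^{c}$, and Minkowski symmetrization then verifies $F_p(\partial\Omega^{c})\le F_p(\partial\Omega_0)$.

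For the lower bound on curvature I analyze the Euler--Lagrange equation. A normal variation $\psi$ of $\partial\Omega$ yields $\delta E_p=\int[p(\kappa^{p-1})_{ss}+(p-1)\kappa^{p+1}]\psi\,ds$ and $\delta A=\int\psi\,ds$, so any centrosymmetric minimizer satisfies
\[
p(\kappa^{p-1})_{ss}+(p-1)\kappa^{p+1}=\lambda
\]
with a Lagrange multiplier $\lambda$. If $\kappa$ vanishes on an arc of positive length the equation forces $\lambda=0$ there and hence globally; but then it reduces to the unconstrained $p$-elastica equation, which has no $C^{2}$ closed convex solution of positive enclosed area, so $\kappa>0$ a.e. Setting $u=\kappa^{p-1}$, the equation becomes $pu_{ss}+(p-1)u^{(p+1)/(p-1)}=\lambda$, and multiplying by $u_s$ and integrating once gives the first integral
\[
\tfrac{p}{2}\,u_s^{2}=\lambda u-\tfrac{(p-1)^{2}}{2p}\,u^{2p/(p-1)}+C.
\]
Since $u$ is periodic along the closed boundary, its trajectory is a closed orbit in the $(u,u_s)$-plane. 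The right-hand side is concave in $u$ and tends to $-\infty$ as $u\to\infty$; a closed periodic orbit touching $\{u=0\}$ would require $C=0$ and $\lambda>0$, and this degenerate orbit can be excluded by showing it fails either the closing condition of the curve or by comparing $F_p$ with a slightly deformed orbit for which $u_{\min}>0$. This leaves $u_{\min}>0$, hence $\kappa\ge c>0$ uniformly.

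The main obstacle I foresee is the scale-invariant inequality in Part 1: the decrease of $F_p$ and the increase of $A$ under Minkowski symmetrization push in opposite directions on the isoperimetric quotient, and combining them requires a careful synthesis of Brunn--Minkowski with the convexity of $\rho\mapsto\int\rho^{1-p}\,d\theta$ along the Minkowski interpolation; if that direct inequality proves elusive the direct-method-in-the-centrosymmetric-subclass route bypasses it. In Part 2 the subtle point is excluding the degenerate phase-portrait orbit with $u_{\min}=0$, which I would handle by a local variation that concentrates curvature into the near-zero-$\kappa$ region while preserving area and convexity, strictly decreasing $F_p$ and contradicting minimality.
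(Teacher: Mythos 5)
Both halves of your proposal have genuine gaps, and in both cases the paper's argument is structured precisely to avoid the difficulty you run into. For the existence of a centrosymmetric minimizer, your Minkowski symmetrization $\Omega^{*}=\tfrac12(\Omega_0+(-\Omega_0))$ does give $F_p(\partial\Omega^{*})\le F_p(\partial\Omega_0)$ and $A(\Omega^{*})\ge a$, but, as you concede, these push in opposite directions for the scale-invariant quantity $F_p^{p/(p-1)}A$, and you never prove the inequality you need; since $\Omega_0$ already minimizes that scale-invariant product, the desired inequality would in fact force equality and cannot be cheap. Worse, your fallback does not bypass the obstacle: minimizing by the direct method \emph{within} the centrosymmetric subclass only produces a minimizer among centrosymmetric sets, whereas the lemma (and its later use, where Andrews' theorem is invoked to identify the global minimizer among all convex sets) requires a centrosymmetric set that minimizes among \emph{all} convex sets of area $a$; to upgrade the subclass minimizer you would again need to compare the two infima, which is exactly the missing inequality. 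The paper sidesteps all of this with a symmetrization that preserves area exactly: take a chord with parallel tangents at its endpoints bisecting the area, keep the half of $\partial\Omega$ with the smaller energy, and rotate it by $\pi$ about the chord's midpoint; the resulting set is centrosymmetric, has area $a$, and energy $\le F_p(\partial\Omega)$.

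For the curvature bound your plan is circular. You derive the unconstrained Euler--Lagrange equation $p(\kappa^{p-1})_{ss}+(p-1)\kappa^{p+1}=\lambda$ from normal variations, but the admissible class is convex sets: where $\kappa$ vanishes or is arbitrarily small, generic normal (or tangent-angle) perturbations destroy convexity, so the constraint $\kappa\ge0$ is active and one only gets a variational inequality, not the ODE --- in the paper the bound $\theta'\ge c>0$ is proved \emph{before} the Euler--Lagrange equation exactly so that the variations $\theta+\eps\psi$ remain monotone, hence admissible. In addition, a minimizer produced by Blaschke selection has a priori only measure-valued curvature, so writing a pointwise second-order ODE and running a phase-plane analysis presumes regularity you have not established, and your exclusion of the degenerate orbit with $u_{\min}=0$ is left as a sketch. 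The paper's Step 2 instead argues by direct comparison, needing no Euler--Lagrange equation: if $\theta'(s_0)=\delta$ is small, replace $\theta$ near $s_0$ (and symmetrically at $s_0+L/2$) by a steeper ramp followed by a constant stretch, which creates two parallel boundary segments of length $\eps/2$ at an energy cost of order $\eps\delta^p$, then excise the strip between them, decreasing the area by at least $\tfrac{w(\Omega)}2\eps+o(\eps)$ at no energy cost; minimality of the scale-invariant product $F_p^{p/(p-1)}A$ then forces the explicit lower bound \eqref{deltabounda} on $\theta'$ a.e. If you want to salvage your outline, you need either a proof of the Minkowski-symmetrization inequality for $F_p^{p/(p-1)}A$ (which I do not expect to be available) or the paper's chord reflection, and in the second part a constrained first-variation argument (or a direct comparison of the above type) rather than the unconstrained elastica ODE.
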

\begin{proof} Since the Lemma contains two claims, we prove them one after the other.
\vskip.2cm

\noindent\sc Step 1. \rm Existence of a centrosymmetric minimizer.
\vskip.1cm

\noindent Let $\Omega$ be a minimizer. Then there exist two points on $\partial \Omega$ with parallel tangents such that the chord connecting these points splits $\Omega$ into two parts $\Omega_1$ and $\Omega_2$ of equal area. Let $\Omega_1$ be such that $F_p(\partial\Omega_1\cap\partial \Omega)\le F_p(\partial\Omega_2\cap\partial \Omega)$. Then the set $\widetilde\Omega$ obtained as the union of $\Omega_1$ and of its rotation by $\pi$ around the midpoint of the chord is centrosymmetric, has the same area as $\Omega$ and $\partial\widetilde\Omega$ has at most the same nonlinear elastic energy $F_p$ as $\partial \Omega$.
\vskip.2cm

\noindent\sc Step 2. \rm Any centrosymmetric minimizer has curvature which is bounded away from zero.
\vskip.1cm

\noindent The usual parametrization of the boundary of a centrosymmetic convex set $\Omega$ is characterized by the properties
\begin{itemize}
\item $\theta$ is monotone nondecreasing,
\item $\theta(s+\tfrac L2)=\pi+\theta(s)$, for $s\in[0, L/2]$.
\end{itemize}

We want to show that, for a.e.\ $s\in[0, L]$,
\begin{equation}\label{deltabounda}
\theta'(s)\ge\sqrt{F_p(\partial \Omega)}\left((p-1)\frac{w(\Omega)}{a2^{p+1}}\right)^{\frac1p}.
\end{equation}
Without loss of generality we can suppose that $\theta(s)$ is differentiable at $s=0$ and $\theta'(0)=\delta$, and prove that
\begin{equation}\label{deltabound}
\delta\ge\sqrt{F_p(\partial \Omega)}\left((p-1)\frac{w(\Omega)}{a2^{p+1}}\right)^{\frac1p}.
\end{equation}
To this aim we start by building a centrosymmetric set $\Omega_\eps$ for small $\eps>0$,  which is characterized by $\theta_\eps(s)$ 
\begin{equation}\label{thetaeps}
\theta_\eps(s):=
\left\{
\begin{array}{ll}
2s\theta(\eps)
 &   s\in[0,\eps/2]  \\
\theta(\eps)   &   s\in[\eps/2,\eps] \\
\theta(s) &  s\in[\eps,L/2],
\end{array}
\right.
\end{equation}
and which still satisfies $\theta_\eps(s+\tfrac L2)=\pi+\theta_\eps(s)$, for $s\in[0, L/2]$.
Using the above function in the definition of elastic energy, we get:
\begin{equation}\label{deltaE}
F_p(\partial\Omega_\eps)^\frac p2\le F_p(\partial\Omega)^\frac p2+\eps\delta^p2^{p-1}+o(\eps),
\quad\text{as}\>\eps\rightarrow0.
\end{equation}
By the Gauss-Green formula we have
\[a=area(\Omega)=\frac12\int_0^L\int_0^s\sin(\theta(s)-\theta(t))\,dt\,ds
\]
and
\[a_\eps=area(\Omega_\eps)=\frac12\int_0^L\int_0^s\sin(\theta_\eps(s)-\theta_\eps(t))\,dt\,ds.
\]
Thus
\begin{eqnarray}\label{deltaA}
|a_\eps- a|\le \frac12\int_0^L\int_0^s|\theta_\eps(s)-\theta(s)|\,dt\,ds+\frac12\int_0^L\int_0^s|\theta_\eps(t)-\theta(t)|\,dt\,ds\le L\eps\theta(\eps)=o(\eps).
\end{eqnarray}

We observe that, since $\theta_\eps(s)$ is constant in $[\eps/2,\eps]\cup[\tfrac L2+\eps/2,\tfrac L2+\eps]$, the boundary of $\Omega_\eps$ contains two parallel segments of length $\eps/2$.
Then we can modify again $\Omega_\eps$ by removing the shaded area as described in Figure \ref{omega_hat}. 
\begin{figure}
\begin{center}
\def\svgwidth{\textwidth}
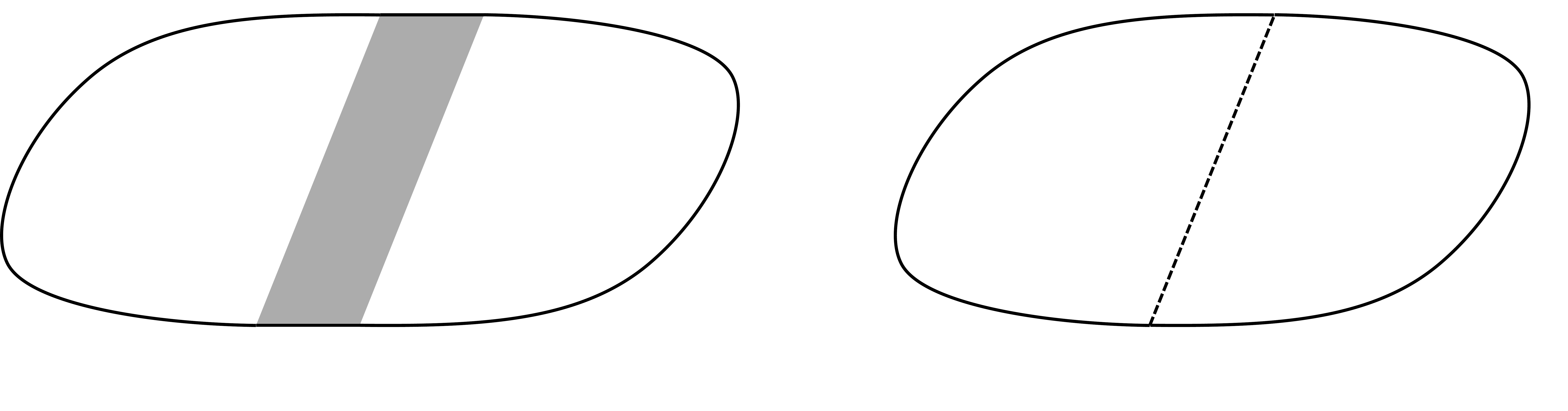
\end{center}
\caption{Construction of $\widehat\Omega_\eps$ from $\Omega_\eps$}\label{omega_hat}
\end{figure}
Thus we obtain a new set $\widehat\Omega_\eps$ of smaller area $\hat a_\eps=area(\widehat\Omega_\eps)$ and same elastic energy as $\Omega_\eps$. In fact,
\begin{equation}\label{deltaAA}
a_\eps-\hat a_\eps
\ge \frac{w(\Omega_\eps)}2\eps=\frac{w(\Omega)}2\eps+o(\eps).
\end{equation}
Collecting \eqref{deltaE}, \eqref{deltaA} and \eqref{deltaAA} we have:
\begin{flalign}
F_p(\partial\Omega)^{\frac p{p-1}}\,a-&F_p(\partial\widehat\Omega_\eps)^{\frac p{p-1}}\,\hat a_\eps\\\notag
&\ge a\, F_p(\partial\Omega)^{\frac p{p-1}}-\left(a+\eps \frac{w(\Omega)}{2}\right)\left(F_p(\partial\Omega)^{\frac p2}+\eps\delta^p2^{p-1}\right)^\frac 2{p-1}+o(\eps)\\\notag
&=\eps\,a\, F_p(\partial\Omega)^{\frac p{p-1}}\left(\frac{w(\Omega)}{2a}-\frac{2^p\delta^p}{(p-1)F_p(\partial\Omega)^{\frac p2}}\right)+o(\eps).
\end{flalign}
Minimality of $\Omega$ implies that the right hand side cannot be positive, that is \eqref{deltabound}.
\end{proof}

\begin{proof}[Proof of Theorem \ref{prop}]
In view of Lemma \ref{centrosymm} we can now compute the Euler equation for the minimizing domain $\Omega$.
Let us consider a set $\Omega_\eps$ such that $\partial\Omega_\eps$ is described by $\theta_\eps(s)=\theta(s)+\eps\psi(s)$ and observe that in view of the lower bound on $\theta'(s)$, the function $\theta_\eps(s)$ characterizes again a centrosymmetric convex set as long as $\psi(s)$ is a smooth function, periodic with period $L/2$, and $\eps>0$ is small enough.
The variation of the functional $F_p(\partial\Omega_\eps)^{\frac p{p-1}}area(\Omega_\eps)$ reads as follows:
\begin{flalign}
\frac d{d\eps}&F_p(\partial\Omega_\eps)^{\frac p{p-1}}area(\Omega_\eps)\biggl|_{\eps=0}=\notag\\
&=
\left(\frac12\right)^{\frac2{p-1}}\left(\int_0^L \theta'(s)^p\,ds\right)^{\frac{3-p}{p-1}}\biggl[
\frac{2p}{p-1}area(\Omega)\int_0^L\theta'(s)^{p-1}\psi'(s)\,ds+\notag\\
&+\frac12\left(\int_0^L \theta'(s)^p\,ds\right)\int_0^L\int_0^s\cos(\theta(s)-\theta(t))(\psi(s)-\psi(t))\,dt\,ds
\biggr].
\end{flalign}
Since by optimality the first variation vanishes, we get
\begin{flalign}
\frac{4p}{p-1}&area(\Omega)\int_0^L\theta'(s)^{p-1}\psi'(s)\,ds+\notag\\
&+\left(\int_0^L \theta'(s)^p\,ds\right)\int_0^L\int_0^s\cos(\theta(s)-\theta(t))(\psi(s)-\psi(t))\,dt\,ds=0.
\end{flalign}
Developing the last integral we obtain
\begin{flalign}\label{ie}
\frac{2p}{p-1}&area(\Omega)\int_0^L\theta'(s)^{p-1}\psi'(s)\,ds+\notag\\
&+\left(\int_0^L \theta'(s)^p\,ds\right)\int_0^L(x(s)-x(0),y(s)-y(0))(x'(s),y'(s))\psi(s)\,ds=0
\end{flalign}
Here we have used the fact that
\begin{flalign}
\int_0^L\int_0^s\cos(\theta(s)-\theta(t))\psi(t)\,dt\,ds=\int_0^L\int_t^L\cos(\theta(s)-\theta(t))\psi(t)\,ds\,dt\notag\\
=-\int_0^L\int_0^t\cos(\theta(s)-\theta(t))\psi(t)\,ds\,dt.\notag
\end{flalign}
If $h(s)=(x(s)-x(0),y(s)-y(0))((y'(s),-x'(s))$, we have $h'(s)=\theta'(s)(x(s)-x(0),y(s)-y(0))((x'(s),y'(s))$, then, by arbitrariness of $\psi$ among periodic functions, equation (\ref{ie}) becomes
\begin{equation}\label{ha}
\frac{2area(\Omega)}{\int_0^L \theta'(s)^p\,ds}(\theta'(s)^{p}+\theta'(s+\tfrac L2)^{p})'=({h(s)+h(s+\tfrac L2)})',\qquad \text{in }(0,\tfrac L2).
\end{equation}
We recall that $\theta'(s)$ is periodic and we observe that $(h(s)+h(s+\tfrac L2))=2(\gamma(s)-\bar\gamma,{\bf n})$, where $\bar \gamma$ is the center of symmetry of $\Omega$.
So \eqref{ha} can be written as
\begin{equation}\label{hb}
\frac{2area(\Omega)}{\int_0^L \theta'(s)^p\,ds}(\theta'(s)^{p})'=(\gamma(s)-\bar\gamma,{\bf n})',\qquad \text{in }(0,L).
\end{equation}
Integrating the equation we have
\begin{equation}
\frac{2area(\Omega)}{\int_0^L \theta'(s)^p\,ds}\kappa^p=(\gamma(s)-\bar\gamma,{\bf n})+M,
\end{equation}
where $M$ is a constant. On the other hand, recalling that
\[\int_0^Lh(s)\,ds=2area(\Omega),
\]
we get $M=0$, that is 
\begin{equation}\label{andrews}
\kappa^p=\alpha(\gamma(s)-\bar\gamma)\cdot{\bf n},
\end{equation}
where $\alpha$ is a positive constant.
We can now apply Theorem 1.5 in \cite{A}, and we owe this idea to Remark 4.1 in \cite{BH}, to conclude that $\Omega$ must be a disc.
\end{proof}

\section{Proof of Theorem \ref{main} and Corollary \ref{corollary}}\label{sec_theorem}

\begin{proof}[Proof of  Theorem \ref{main}]

The basic strategy is a reduction to the convex case. If $\Omega$ is a simply connected smooth set with boundary $\partial\Omega$ of class $C^1$ and piecewise of class $C^2$, then $\kappa$ is well defined in all but finite number of points. To illustrate the idea we consider an easy case where the domain $\Omega$ is like the one in Figure \ref{reduction}. Assume we can find two chords that cut two disjoint convex sets, entirely contained in $\Omega$ (shaded in grey in Figure \ref{reduction}), such that the tangents to $\partial\Omega$ at the end points of each chord are parallel. By rotating the two convex sets around the midpoint of each chord we can construct two convex sets, namely $\Omega_1$ and $\Omega_2$, with boundaries of class $C^1$ and piecewise of class $C^2$.
Then we have
\[
E_f(\partial\Omega)\ge \frac{1}{2}\left(E_f(\partial\Omega_1)+E_f(\partial\Omega_2)\right) \ge  \frac{1}{2}\left(E_f(\partial D_1)+E_f(\partial D_2)\right).
\]
here $D_1$ and $D_1$ are discs of same area as $\Omega_1$ and $\Omega_2$ respectively.
If $D$ is the disc of area equal to $\frac12 \left(area(D_1)+area(D_2)\right)$, and if we denote by $R, R_1, R_2$ the radii of $D, D_1, D_2$, using assumption (2) we have
\[
\frac{1}{2}\left(E_f(\partial D_1)+E_f(\partial D_2)\right)=\pi\left(R_1f\left(\frac1{R_1}\right) + R_2f\left(\frac1{R_2}\right)\right)\ge 2\pi \left(Rf\left(\frac1{R}\right)\right)=E_f(\partial D).
\] 
Hence we are able to prove that $E_f(\partial\Omega)\ge E_f(\partial D)$ with $area(E)>area(D)$. To conclude the proof we observe that assumptions (1) and (2) imply 
\[
sf'(s)\ge\int_0^s r^{-1}f(r)dr \qquad\mbox{ for $s>0$,}
\]
and therefore $f$ is an increasing functions in $[0,\infty]$. As a consequence the energy $E_f$ of the boundary of a disc is a monotone decreasing function of the radius.

Obviously the previous proof relies on the construction of the two sets $\Omega_1$ and $\Omega_2$ which is not always trivial. In \cite{FKN} a set $\Omega$ like the one in Figure \ref{reduction} was labeled as a set \emph{holding two disjoint convex sets.}
Even if in general a set does not \emph{hold two disjoint convex sets}, the proof of \cite[proof of Theorem 1]{FKN} shows that it is always possible to cut and paste the boundary of a $C^2$ piecewise $C^1$ set, possibly gluing a finite number of segments, to construct a $C^2$ piecewise $C^1$ curve enclosing a domain $\widehat \Omega$ having smaller area and eventually \emph{holding two disjoint convex sets}. Since the curve $\gamma$ is constructed using only pieces of the boundary of $\partial \Omega$ together with a finite number of straight segments, it is trivial that $E_f(\partial\widehat\Omega)\le E_f(\partial\Omega)$. We observe that for non convex sets we establish a comparison with a disc of smaller area and therefore the equality case follows easily.

 
\begin{figure}
\def\svgwidth{6cm}
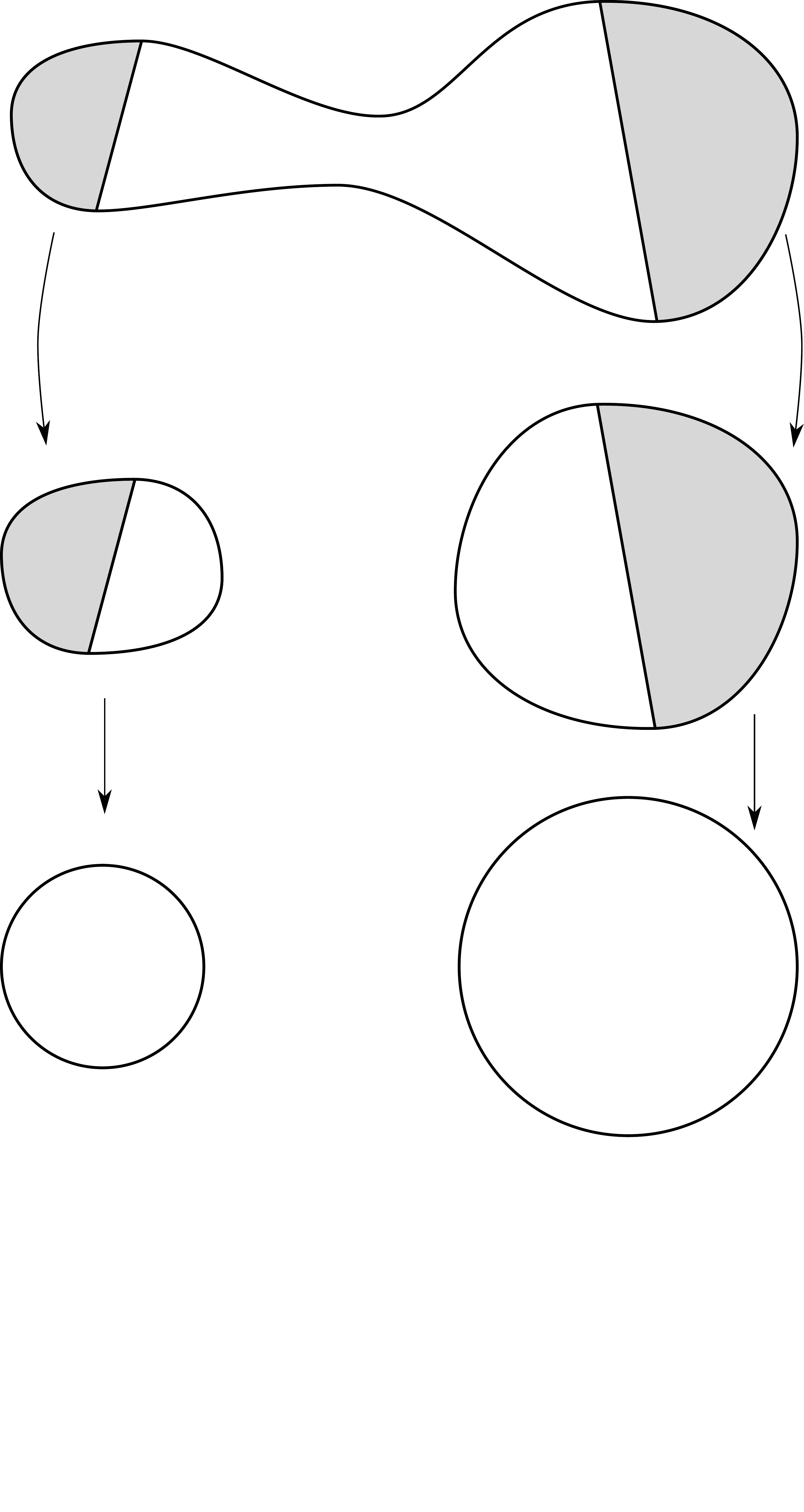
\caption{Comparing non convex sets to balls.}\label{reduction}
\end{figure}



\end{proof}

\begin{proof}[Proof of Corollary \ref{corollary}]
Let $1\le p\le\infty$. We can employ Theorem \ref{main} and Theorem \ref{prop} to establish \eqref{isop} for $C^1$ piecewise $C^2$ simple closed curves. The disc and only the disc achieves the equalities. For any $W^{2,p}$ simple closed curve $\gamma$ there exists a sequence of $C^1$ piecewise $C^2$ simple closed curves $\{\gamma_n\}_{n\in\mathbb N}$ so that
\[
\lim_n \|\gamma-\gamma_n\|_{W^{2,p}(0,L)}=0.
\]
Therefore \eqref{isop} are valid also for $W^{2,p}$ curves. By standard arguments in calculus of variation a $W^{2,p}$ simple closed curve achieving equality in \eqref{isop} is also analytic and therefore equalities can hold only for discs.

For $p=\infty$ we can pass to the limit $p\to\infty$ in \eqref{isop} as $p\to\infty$. The result was already known \cite{FNT}, \cite[Proposition 2.1]{HT}, \cite[Lemma 2.2]{Ka}, \cite{Pa}, \cite{PI},  where the equality case is also discussed.
\end{proof}

\end{document}